\newtheorem{theorem}{Theorem}
\newtheorem{definition}[theorem]{Definition}
\newtheorem{lemma}[theorem]{Lemma}
\newtheorem{remark}[theorem]{Remark}
\newcommand{\addresseshere}{%
  \enddoc@text\let\enddoc@text\relax
}
\newcommand{\rr}{\mathbb{R}}
\renewcommand{\ss}{\mathbb{S}}
\newcommand{\pr}{\partial_{r}}
\newcommand{\pn}{\partial_{\nu}}
\newcommand{\M}{\mathbb{M}}
\newcommand{\Hess}{\mathrm{Hess}}
\newcommand{\ric}{\mathrm{Ric}}
\newcommand{\dive}{\mathrm{div}}
\newcommand{\s}{\sigma}
\providecommand{\abs}[1]{\left|#1\right|}
\begin{document}

\title[A Serrin-type symmetry result on model manifolds]{A Serrin-type symmetry result on model manifolds: an extension of the Weinberger argument}
\author{ Alberto Roncoroni}
\address{Alberto Roncoroni, Dipartimento di Matematica F. Casorati, Universit\`a degli Studi di Pavia, Via Ferrata 5, 27100 Pavia, Italy}
\email{alberto.roncoroni01@universitadipavia.it}

\date{\today}
\subjclass[2010]{35R01, 35N25 (primary); 58J05 (secondary)} 
\keywords{Overdetermined PDE, Symmetry, Model Manifolds}

\begin{abstract}
We consider the classical $\lq\lq$Serrin's symmetry result" for the overdetermined boundary value problem related to the equation $\Delta u=-1$ in a model manifold of non-negative Ricci curvature. Using an extension of the Weinberger classical argument we prove a Euclidean symmetry result under a suitable $\lq\lq$compatibility'' assumption between the solution and the geometry of the model.
\end{abstract}

\maketitle

\section{Preliminaries and statement of the result}

A classical result obtained by Serrin in \cite{Serrin} is the following: 
\begin{theorem}
Let $\Omega$ be a bounded domain in the Euclidean space $\mathbb{R}^{m}$ whose boundary is of class $C^2$. Suppose that $\Omega$ supports a solution $u \in C^{2}(\Omega)\cap C^{1}(\bar \Omega)$ of the overdetermined problem
\begin{equation}\label{serrinpb}
\begin{cases}
 \Delta u=-1 &\mbox{in } \Omega, \\ u=0 &\mbox{on } \partial\Omega, \\ \partial_{\nu}u=\text{constant} &\mbox{on } \partial\Omega,
\end{cases}
\end{equation}
where $\nu$ denotes the exterior unit normal to $\partial \Omega$. Then $\Omega$ is a ball and $u$ has this specific form
\begin{equation}
u(r)=\dfrac{1}{2m}(b^2-r^2),
\end{equation}
where $b$ is the radius of the ball and $r$ denotes distance from its center.
\end{theorem}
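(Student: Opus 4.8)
The plan is to run Weinberger's integral-identity argument, which is the approach the present paper extends to model manifolds (Serrin's original proof, by the method of moving planes, is a different route and not the one relevant here). Write $c:=\partial_\nu u$ for the constant boundary value of the normal derivative. The first, soft, step is to record the consequences of the maximum principle: since $\Delta u=-1<0$ in $\Omega$ and $u=0$ on $\partial\Omega$, the strong maximum principle gives $u>0$ in $\Omega$, and Hopf's boundary-point lemma gives $\partial_\nu u<0$ on $\partial\Omega$, so $c<0$; integrating $\Delta u=-1$ over $\Omega$ and applying the divergence theorem gives the normalization $|\Omega|=-c\,|\partial\Omega|$.

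The core object is the $P$-function $P:=|\nabla u|^2+\tfrac{2}{m}u$, for which I would establish two properties. (i) $P$ is subharmonic: by Bochner's formula $\tfrac12\Delta|\nabla u|^2=|\Hess u|^2+\nabla u\cdot\nabla(\Delta u)=|\Hess u|^2$ since $\Delta u$ is constant, and $|\Hess u|^2\ge\tfrac{(\Delta u)^2}{m}=\tfrac1m$ by the Cauchy--Schwarz inequality applied to the eigenvalues of $\Hess u$; as $\Delta(\tfrac2m u)=-\tfrac2m$, this yields $\Delta P\ge0$. (ii) On $\partial\Omega$ the vanishing of $u$ forces $\nabla u$ to be normal, $\nabla u=c\,\nu$, so $|\nabla u|^2=c^2$ and hence $P\equiv c^2$ on $\partial\Omega$; the maximum principle then gives $P\le c^2$ throughout $\Omega$.

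The decisive step is to prove that $P$ equals its boundary value on average, $\int_\Omega P=c^2|\Omega|$. Testing the equation against $u$ and integrating by parts gives $\int_\Omega|\nabla u|^2=\int_\Omega u$, so $\int_\Omega P=\tfrac{m+2}{m}\int_\Omega u$. A Rellich--Pohozaev identity — multiply $\Delta u=-1$ by $x\cdot\nabla u$, integrate over $\Omega$, and simplify the boundary terms using $u=0$ and $\nabla u=c\nu$ on $\partial\Omega$ together with $\dive x=m$ and $\int_{\partial\Omega}\langle x,\nu\rangle=m|\Omega|$ — produces $\int_\Omega|\nabla u|^2=\tfrac{m}{m+2}\,c^2|\Omega|$. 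Combining, $\int_\Omega P=c^2|\Omega|$. Since $P\le c^2$ in $\Omega$ while the two integrals coincide, $P\equiv c^2$ in $\Omega$; thus $\Delta P\equiv0$ and both inequalities in (i) are equalities, in particular $|\Hess u|^2=\tfrac{(\Delta u)^2}{m}$, the equality case of Cauchy--Schwarz, which forces $\Hess u=-\tfrac1m\,\mathrm{Id}$ on $\Omega$.

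To finish, integrate $\Hess u=-\tfrac1m\,\mathrm{Id}$ to get $u(x)=-\tfrac1{2m}|x-x_0|^2+\kappa$ for some $x_0\in\rr^m$ and $\kappa\in\rr$; since $\Omega=\{u>0\}$ and $u=0$ on $\partial\Omega$, the level set $\partial\Omega$ is a sphere of some radius $b$ centred at $x_0$, necessarily $\kappa=\tfrac{b^2}{2m}$, and $u(r)=\tfrac1{2m}(b^2-r^2)$ with $r:=|x-x_0|$, which is the claimed conclusion. I expect the main obstacle to be the bookkeeping in the Pohozaev identity needed to pin down $\int_\Omega P=c^2|\Omega|$ exactly; this is also the one place where the flat structure enters essentially — through the position vector field $x$ and the identity $\dive x=m$ — which is why the model-manifold version in this paper must replace it by a suitable compatibility assumption between $u$ and the warped-product geometry.
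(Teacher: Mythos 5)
Your proof is correct and is essentially the Weinberger argument that this paper takes as its starting point and then extends: the subharmonic $P$-function $|\nabla u|^2+\tfrac{2}{m}u$, the integral identity $(m+2)\int_\Omega u=mc^2|\Omega|$ (your Pohozaev step with the position field $x$ is exactly the Euclidean case $\s(r)=r$ of the paper's Lemma \ref{lemma2}, obtained there from $\Delta(r\,\pr u)=r\,\pr\Delta u+2\Delta u$), and the rigidity $\Hess(u)=-\tfrac1m\,\mathrm{Id}$ followed by integration. The paper's proof of Theorem \ref{teofond} reduces to precisely this argument in the flat case, so there is nothing further to flag.
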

This result is known as the ``Serrin's symmetry result'' or the ``Serrin's rigidity result''. The technique used by Serrin to prove this result is a refinement of the famous \emph{reflection principle} due to Alexandrov in \cite{Alexandrov} and is the so-called ``moving planes method'' together with the Maximum Principle and a new version of the Hopf's boundary point Lemma. In particular Alexandrov introduced this method to prove that a closed (i.e. compact without boundary) hypersurface embedded in the Euclidean space $\mathbb{R}^m$ with constant mean curvature must be a sphere. Moreover in \cite{KP}, Kumaresan and Prajapat used the same method of the moving planes to prove the analogous of the ``Serrin's symmetry result'' in the case of bounded domains of the hyperbolic space $\mathbb{H}^{m}$ and of the hemisphere $\mathbb{S}^{m}_{+}$.$\\ $ 
We mention that the technique of Serrin inspired the study of various properties and symmetry results for positive solutions of elliptic partial differential equations in bounded and unbounded domains of the Euclidean space (see the seminal paper by Gidas, Ni and Nirenberg \cite{GNN}). $\\ $
In this article we focus on the more analytic approach by Weinberger \cite{Weinberger} which is based on the Maximum Principle, the integration by parts, the Cauchy-Schwarz inequality and the Bochner formula. We try to extend his proof to the so-called model manifolds with non-negative Ricci curvature.$\\ $
We mention that the approach of Weinberger inspired several works in the context of elliptic
partial differential equations (see e.g. \cite{CGS,CV2,FK,FV,FGK,GL,NT,P,S} and their references).$\\ $ 
In general, as we will see, the importance and the convenience of the model manifolds lies in the fact that their geometry and some natural differential operators (such as the Laplacian) have a particularly simple and explicit description.\medskip

First of all we recall the definition of the $m$-dimensional model manifold 

\begin{definition}
A Riemannian manifold $(\M^{m}_{\s}, g_{\M^{m}_{\s}})$ is called a model manifold if:
\begin{equation*}
\M^{m}_{\s}:=\dfrac{[0,R) \times \ss^{m-1}}{\sim} \quad \textbf{•}{and} \quad g_{\M^{m}_{\s}}:=dr\otimes dr + \s^{2}(r) g_{\ss^{m-1}};
\end{equation*}
where $R\in(0,+\infty]$, $\sim$ is the relation that identifies all the points of $\lbrace 0\rbrace\times\mathbb{S}^{m-1}$ and $\s:[0,R)\rightarrow[0,+\infty)$ is a smooth function such that:
\begin{itemize}
\item $\s(r) >0$, for all $r>0$;
\item $\s^{(2k)}(0)=0$, for all $k= 0,1,2,\dots$;
\item  $\s'(0) =1$.
\end{itemize}
Moreover the unique point corresponding to $r=0$ is called the pole of the model and denoted by $o\in\mathbb{M}^{m}_{\s}$ and $\sigma$ is called the warping function.
\end{definition}
Important examples of model manifolds are the so called space-forms: $\mathbb{R}^m$, $\mathbb{H}^m$ and $\mathbb{S}^m$. Explicitly
\begin{itemize}
\item The Euclidean space $\mathbb{R}^m$ is isometric to the model manifold $\M^{m}_{\s}$ with $\s(r)=r:[0,+\infty)\rightarrow[0,+\infty)$.
\item The iperbolic space $\mathbb{H}^m$ is isometric to the model manifold $\M^{m}_{\s}$ with $\s(r)=\sinh(r):[0,+\infty)\rightarrow[0,+\infty)$.
\item The standard sphere $\mathbb{S}^m\setminus{\lbrace N\rbrace}$ is isometric to the model manifold $\M^{m}_{\s}$ with $\s(r)=\sin(r):[0,\pi)\rightarrow[0,+\infty)$.
\end{itemize}\medskip

We also recall that in $\M^{m}_{\s}$ the Ricci curvature has the following explicit expression. Given $x\in\M^{m}_{\s}$ and $X\in\nabla r(x)^{\perp}$ in $T_{x}\M^{m}_{\s}$ a unit vector we have 
\begin{equation*}
\ric_{\M_{\s}^m}(X,X)=(m-2)\dfrac{1-(\s')^2}{\s^2}-\dfrac{\s''}{\s},
\end{equation*}
and
\begin{equation*}
\ric_{\M_{\s}^m}(\nabla r,\nabla r)=-(m-1)\dfrac{\s''}{\s}.
\end{equation*}
     
With these preliminaries, the main Theorem of this article is the following

\begin{theorem}\label{teofond}
Let $\Omega\subset\M_{\s}^m$ be a smooth domain with $o\in\Omega$. Assume that $\Omega\Subset B_{\tilde{R}}(o)$ where the ray $\tilde{R}>0$ is such that the following conditions on $\s$ are satisfied on the interval $[0,\tilde{R})$:
\begin{itemize}
 \item [$(a)$] $\ric_{\M_{\s}^m}\geq 0 $, i.e. $\s'' \leq 0$ and $(m-2)\left(1-(\s')^{2} \right) - \s\, \s'' \geq 0$;
 \item [$(b)$] $\s' >0$.
\end{itemize}If $\Omega$ supports a solution $u$ of \eqref{serrinpb} and $u$ satisfies the following ``compatibility'' condition
\begin{equation}\label{teofondhp}
 \int_{\Omega}  \frac{(\s'' \s^{m-1})'}{\s^{m-1}} u^{2}\geq 0
\end{equation}
then we have that $\Omega$ is a Euclidean ball of radius $\rho$ centred in the pole $o$ of the model and $u$ has the specific form: 
\begin{equation}\label{uexplicit}
u(r)=\dfrac{1}{2m}(\rho^2-r^2)
\end{equation}
where $r(x)=dist(x,o)$.
\end{theorem}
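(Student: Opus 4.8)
The plan is to mimic Weinberger's original argument, replacing the Euclidean Laplacian and Bochner formula by their model-manifold counterparts and isolating where the geometry enters. First I would introduce the auxiliary ``$P$-function''
\[
P := |\nabla u|^2 + \frac{2}{m}\, u,
\]
which in the Euclidean case is subharmonic; the goal is to show that $P$ is constant on $\Omega$ and that this constancy forces $\operatorname{Hess} u = -\tfrac1m g$, from which the explicit form of $u$ and the fact that $\Omega$ is a geodesic ball follow. To implement this, I would start from the Bochner formula
\[
\tfrac12 \Delta |\nabla u|^2 = |\operatorname{Hess} u|^2 + \langle \nabla u, \nabla \Delta u\rangle + \operatorname{Ric}(\nabla u, \nabla u),
\]
use $\Delta u = -1$ to kill the middle term, and apply the Cauchy--Schwarz inequality $|\operatorname{Hess} u|^2 \ge \tfrac1m (\Delta u)^2 = \tfrac1m$. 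Together with $\operatorname{Ric}\ge 0$ from hypothesis $(a)$ this gives $\Delta P \ge 0$, so $P$ is subharmonic and attains its maximum on $\partial\Omega$, where $|\nabla u| = |\partial_\nu u| =: c$ is the prescribed constant and $u = 0$, hence $P \le c^2$ on $\Omega$.

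The second half of Weinberger's scheme is the integral identity that upgrades the one-sided bound to equality. Integrating $\Delta u = -1$ over $\Omega$ and using the Neumann data gives $|\Omega| = c\,|\partial\Omega|$ (here $|\cdot|$ is the Riemannian measure). Then I would integrate $u\,\Delta u$ by parts, $\int_\Omega |\nabla u|^2 = \int_\Omega u$, and compare with $\int_\Omega P$. The new feature relative to the Euclidean case is that the Pohozaev-type identity one needs — relating $\int_\Omega P$, $\int_\Omega u$ and a boundary term — acquires a bulk correction term governed precisely by the quantity $(\s''\s^{m-1})'/\s^{m-1}$, because the relevant ``radial'' vector field $r\partial_r$ is no longer a dilation field and its deformation tensor is not a multiple of the metric. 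This is exactly where the compatibility hypothesis \eqref{teofondhp} is designed to enter: the sign condition $\int_\Omega \frac{(\s''\s^{m-1})'}{\s^{m-1}} u^2 \ge 0$ makes the correction term have the ``good'' sign, so that the chain of inequalities
\[
c^2 |\Omega| \;=\; c^2 |\Omega| \cdot \frac{|\Omega|}{c\,|\partial\Omega|}\cdot\frac{c|\partial\Omega|}{|\Omega|} \;\ge\; \int_\Omega P \;\ge\; \ldots
\]
(carried out carefully with the correct constants) closes up, forcing $P \equiv c^2$ and, tracing back, $|\operatorname{Hess} u|^2 = \tfrac1m$ with equality in Cauchy--Schwarz, i.e. $\operatorname{Hess} u = -\tfrac1m g$ everywhere and $\operatorname{Ric}(\nabla u,\nabla u)\equiv 0$.

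Finally, from $\operatorname{Hess} u = -\tfrac1m g$ I would extract the rigidity. Since $u$ attains an interior maximum (as $\Delta u<0$ and $u=0$ on the boundary, $u>0$ inside), let $p$ be a maximum point; integrating $\operatorname{Hess} u = -\tfrac1m g$ along geodesics from $p$ shows $u = u(p) - \tfrac1{2m} d(\cdot,p)^2$, so level sets of $u$ are geodesic spheres about $p$ and $\Omega = \{u>0\}$ is the geodesic ball $B_\rho(p)$ with $\rho^2 = 2m\,u(p)$. The remaining point — and the place I expect the real work to be — is to show $p = o$ and that this ball is \emph{isometric to the Euclidean ball}, i.e. $\s(r) = r$ on $[0,\rho)$. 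Here one uses that $\nabla u = -\tfrac1m\, d(\cdot,p)\,\nabla d(\cdot,p)$ must be smooth across $p$, which (combined with the model structure and smoothness at the pole $o$, where $\nabla r$ is singular) pins down $p=o$; then plugging the explicit $u=u(r)$ into $\Delta_{\M_\s} u = \partial_r^2 u + (m-1)\tfrac{\s'}{\s}\partial_r u = -1$ forces $-\tfrac1m - (m-1)\tfrac{\s'}{\s}\cdot\tfrac{r}{m} = -1$, i.e. $\s'/\s = 1/r$, whence $\s(r)=r$ on $[0,\rho)$ using $\s'(0)=1$. The main obstacle is getting the model-manifold Pohozaev identity with the correct correction term and verifying that hypothesis \eqref{teofondhp} has precisely the sign needed to make every inequality in the chain point the same way; the concluding rigidity step, while requiring care at the pole, is comparatively routine once $\operatorname{Hess} u = -\tfrac1m g$ is in hand.
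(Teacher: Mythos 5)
Your overall architecture coincides with the paper's: Bochner plus Cauchy--Schwarz makes the $P$-function subharmonic, the strong maximum principle gives the dichotomy $P<c^2$ or $P\equiv c^2$, an integral identity involving the correction term rules out the strict case, and $\Hess u=-\frac1m g_{\M^m_\s}$ yields the rigidity. The genuine gap sits exactly where you flag ``the real work'': the integral identity. Your proposed chain is \emph{unweighted} --- you compare $\int_\Omega P$ with $c^2|\Omega|$ via $\int_\Omega|\nabla u|^2=\int_\Omega u$ and $|\Omega|=|c|\,|\partial\Omega|$ --- but no unweighted Pohozaev-type identity produces the correction term $\int_\Omega\frac{(\s''\s^{m-1})'}{\s^{m-1}}u^2$ that appears in hypothesis \eqref{teofondhp}, so the chain cannot close in the form you wrote it. The paper instead multiplies the pointwise strict inequality $m|\nabla u|^2+2u<mc^2$ by the weight $\s'$ and integrates; the identities that then close the argument are the commutator formula \eqref{eq2} for the field $\s\,\partial_r$ (not $r\partial_r$, which would generate a different and less tractable remainder), the relation $\int_\Omega\s\,\pr u=-m\int_\Omega u\,\s'$, and $\int_\Omega\s''u\,\pr u=-\frac12\int_\Omega\frac{(\s''\s^{m-1})'}{\s^{m-1}}u^2$, assembled into Lemma \ref{lemma2}, i.e.\ $(m+2)\int_\Omega u\,\s'=mc^2\int_\Omega\s'+\frac{m-2}{2}\int_\Omega\frac{(\s''\s^{m-1})'}{\s^{m-1}}u^2$. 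This weighting is precisely where hypothesis $(b)$, $\s'>0$, enters; your proposal never invokes $(b)$ at all, which is the telltale sign that the weight is missing from your scheme.

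A secondary issue: in the rigidity step you integrate $\Hess u=-\frac1m g_{\M^m_\s}$ from the interior maximum point $p$ and then assert that smoothness of $\nabla u$ ``pins down $p=o$''. As stated this is not an argument, since $u(p)-\frac1{2m}\,d(\cdot,p)^2$ is smooth for any $p$; what actually forces the centre to be the pole is the interaction with the model structure (the paper integrates along geodesics issuing from $o$ and kills the linear term $\alpha r$ by $C^1$-regularity in normal coordinates at $o$). Your final deduction of $\s(r)=r$ from $\Delta u=-1$ applied to the radial profile is correct and is equivalent to the paper's use of the angular component of $\Hess u=-\frac1m g_{\M^m_\s}$.
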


\begin{remark}
 {\rm
 We analyse the hypothesis of the Theorem.
 \begin{itemize}
  \item Condition $(b)$ appears in other articles on the subject, see for instance \cite{CV1} by Ciraolo and Vezzoni.
  \item The ``compatibility'' condition \eqref{teofondhp} describes a property of the solution in relation to the geometry of the model. It is automatically satisfied by any solution of \eqref{serrinpb} in the case of the Euclidean space and it can not be reduced to a simple condition on the model, like 
 \begin{equation*}
 (\s'' \s^{m-1})'\geq 0.
 \end{equation*}
 Indeed, in this case, the three conditions are compatible only with the flat case: consider $f(r):= \s''(r) \s^{m-1}(r)$. Then
  $f(0)=0$ and if $f'(r) \geq 0$, i.e. $f(r)$ is non-decreasing, so $f(r) \geq 0$ for $r>0$. But $\s''(r)\leq 0$ according to $(a)$, so we have that $\s''(r) = 0$. In this case the result is well known and is presented in Weinberger's article.
 \item Moreover in \cite{Alessandrini-Magnanini} Alessandrini and Magnanini consider a symmetry result for a overdetermined problem and they assume a ``compatibility'' condition as an integral on the boundary of the domain involving the solution and its gradient.
 \end{itemize}
   }
\end{remark}

%

\begin{remark}\label{condbordobislin}
 {\rm
Observe that, by the Strong Maximum Principle, a solution $u$ of \eqref{serrinpb} is positive in $\Omega$. Moreover since $\pn u=constant\neq 0$ on $\Omega$ we obtain that $\abs{\nabla u}\neq 0$ on $\partial\Omega$ and the smooth hypersurface  $\partial\Omega=\lbrace u=0\rbrace$ has exterior normal given by
\begin{equation*}
\nu=-\dfrac{\nabla u}{\abs{\nabla u}}\mid_{\partial\Omega}.
\end{equation*}
This implies that 
\begin{equation*}
\partial_{\nu}u=-\abs{\nabla u} \text{ on } \partial\Omega.
\end{equation*}

%
 }
\end{remark}

\section{Explicit computations towards the proof of Theorem \ref{teofond}}

The Laplace-Beltrami operator $\Delta$ of $\M_{\s}^{m}$ acts on $C^{2}$-functions $u: \M^{m}_{\s}\to \rr$ as follows:
\begin{align}\label{eq1a}
\Delta u &= \pr^{2} u + (m-1)\frac{\s'}{\s} \pr u + \frac{1}{\s^{2}} \bar \Delta u \\
&= \frac{\pr (\s^{m-1}\pr u)}{\s^{m-1}}+ \frac{1}{\s^{2}} \bar \Delta u .\nonumber
\end{align}
where $\bar \Delta$ denotes the Laplacian on the standard sphere $(\ss^{m-1},g_{\ss^{m-1}})$. Using this expression we obtain:
\begin{lemma}\label{lemma1}
The following general formula holds:
\begin{equation}\label{eq2}
\Delta(\s \, \pr u) =  \s\, \pr \Delta u + 2\s'\, \Delta u + (2-m) \s''\, \pr u.
\end{equation}
\end{lemma}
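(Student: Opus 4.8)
The plan is to verify \eqref{eq2} by a direct computation, expanding both sides with the explicit expression \eqref{eq1a} for the Laplacian and comparing the resulting terms grouped by order of radial derivative. I would set $v:=\s\,\pr u$ and first treat the left-hand side. Since $\s$ depends on $r$ alone, the spherical Laplacian $\bar\Delta$ commutes both with multiplication by $\s$ and with $\pr$, so $\bar\Delta v=\s\,\pr\bar\Delta u$; combining this with $\pr v=\s'\pr u+\s\,\pr^{2}u$ and $\pr^{2}v=\s''\pr u+2\s'\pr^{2}u+\s\,\pr^{3}u$ and inserting into \eqref{eq1a} yields
\begin{equation*}
\Delta(\s\,\pr u)=\s\,\pr^{3}u+(m+1)\s'\,\pr^{2}u+\left[\s''+(m-1)\frac{(\s')^{2}}{\s}\right]\pr u+\frac{1}{\s}\,\pr\bar\Delta u .
\end{equation*}

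For the right-hand side I would differentiate \eqref{eq1a} in $r$, using $\pr(\s'/\s)=\s''/\s-(\s')^{2}/\s^{2}$ and $\pr(\s^{-2})=-2\s'\s^{-3}$, to obtain $\s\,\pr\Delta u$ explicitly, then add $2\s'\,\Delta u$ and $(2-m)\s''\,\pr u$ and collect terms. One checks: the third-order term is $\s\,\pr^{3}u$; the $\pr^{2}u$ term is $[(m-1)+2]\s'\pr^{2}u=(m+1)\s'\pr^{2}u$; the $\pr\bar\Delta u$ term is $\s^{-1}\pr\bar\Delta u$; the two spherical contributions $-2\s'\s^{-2}\bar\Delta u$ (from $\s\,\pr\Delta u$) and $+2\s'\s^{-2}\bar\Delta u$ (from $2\s'\Delta u$) cancel, which is why no $\bar\Delta u$ term survives on the left; and the coefficient of $\pr u$ reduces, using $(m-1)(-(\s')^{2}/\s)+2(m-1)(\s')^{2}/\s=(m-1)(\s')^{2}/\s$ and $(m-1)\s''+(2-m)\s''=\s''$, to exactly $\s''+(m-1)(\s')^{2}/\s$. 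Matching these with the expression above proves \eqref{eq2}.

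I do not expect a conceptual obstacle here; the statement is an identity and the proof is bookkeeping. The point that requires care is the cancellation in the spherical part (which forces the coefficient $2\s'$ in the middle term) and the precise role of the coefficient $(2-m)$ in the last term, which is exactly what converts the $(m-1)\s''\,\pr u$ produced when $\pr$ hits $\s'/\s$ inside $\s\,\pr\Delta u$ into the single $\s''\,\pr u$ on the left. As an alternative organization one can use the divergence form $\Delta u=\s^{1-m}\pr(\s^{m-1}\pr u)+\s^{-2}\bar\Delta u$ from \eqref{eq1a} to streamline the handling of the radial part, although the spherical cancellation is unchanged.
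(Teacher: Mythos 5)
Your computation is correct, and it is essentially the same argument as the paper's: a direct expansion using the explicit expression \eqref{eq1a} for the Laplacian on the model, with the key cancellation of the $\bar\Delta u$ terms and the identity $\bar\Delta(\s\,\pr u)=\s\,\pr\bar\Delta u$ (which the paper uses implicitly in its last step). The only difference is organizational — you reduce both sides to a common normal form, while the paper massages $\s\,\pr\Delta u$ until $\Delta(\s\,\pr u)$ appears — which does not change the substance.
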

\begin{remark}
{\rm
 In particular, if $\s(r) = r$ and, hence, $\M^{m}_{\s} = \rr^{m}$, we obtain
\begin{equation}\label{eq3}
 \Delta (r \, \pr u) = r \, \pr \Delta u +2 \Delta u,
\end{equation}
which is the traditional formula used by Weinberger to prove Serrin result.
}
\end{remark}
\begin{proof}
We compute 
\begin{align*}
 \s \pr (\Delta u) =& \s \left\lbrace\pr^{3}u+(m-1)\dfrac{\s''\s-(\s')^{2}}{\s^{2}}\pr u+(m-1)\dfrac{\s'}{\s}\pr^2 u-2\dfrac{\s'}{\s^3}\bar \Delta u + \dfrac{1}{\s^2}\pr(\bar \Delta u)\right\rbrace\\
 =&\s\pr^3 u+(m-2)\s''\pr u+\s''\pr u-2(m-1)\dfrac{(\s')^2}{\s}\pr u+(m-1)\dfrac{(\s')^2}{\s}\pr u+\\
 &+(m+1)\s'\pr^2 u-2\s'\pr^2 u -2\dfrac{\s'}{\s^2}\bar \Delta u+\dfrac{1}{\s}\pr(\bar \Delta u)+\dfrac{1}{\s^2}\bar \Delta(\s\pr u)-\dfrac{1}{\s^2}\bar \Delta(\s\pr u)\\
 =& \Delta(\s\pr u)+(m-2)\s''\pr u-2\s'\Delta u+\dfrac{1}{\s}\pr(\bar \Delta u)-\dfrac{1}{\s^2}\bar \Delta(\s\pr u), 
 \end{align*}
 i.e.
 \begin{equation*}
 \Delta(\s\pr u)=\s \pr (\Delta u)+(2-m)\s''\pr u+2\s'\Delta u.
 \end{equation*}
\end{proof}
Now we focus on the solution $u$ of \eqref{serrinpb} (from now on we put the constant in \eqref{serrinpb} equal to $c$) and we show the following 
\begin{lemma}\label{lemma2}
 Let $\Omega$ and $u$ satisfy \eqref{serrinpb}. Then:
\begin{equation}\label{eq5}
(m+2)\int_{\Omega} u\,\s'  = m c^{2}\int_{\Omega} \s' + \frac{(m-2)}{2}  \int_{\Omega} \frac{(\s'' \s^{m-1})'}{\s^{m-1}} u^{2}.
\end{equation}
\end{lemma}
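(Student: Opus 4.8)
The plan is to test the formula of Lemma~\ref{lemma1} against the solution $u$ and integrate by parts, using $\Delta u=-1$ and $u|_{\partial\Omega}=0$. Since $\pr\Delta u=0$, identity \eqref{eq2} collapses to $\Delta(\s\,\pr u)=-2\s'+(2-m)\,\s''\,\pr u$. Multiplying by $u$ and integrating over $\Omega$ gives, on the one hand,
\begin{equation*}
\int_{\Omega} u\,\Delta(\s\,\pr u)=-2\int_{\Omega} u\,\s'+(2-m)\int_{\Omega} u\,\s''\,\pr u .
\end{equation*}
On the other hand, Green's second identity together with $u=0$ on $\partial\Omega$ and $\Delta u=-1$ yields
\begin{equation*}
\int_{\Omega} u\,\Delta(\s\,\pr u)=-\int_{\Omega} \s\,\pr u-\int_{\partial\Omega}(\s\,\pr u)\,\pn u ,
\end{equation*}
where one uses that $\s\,\pr u=\langle\nabla u,\s\,\partial_{r}\rangle$ is regular up to the pole $o$, the field $\s\,\partial_{r}$ being smooth on $\M^{m}_{\s}$.

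The next step is to rewrite the two terms on the right-hand side of the last display by means of the divergence theorem. The elementary fact needed is $\dive\big(f(r)\,\partial_{r}\big)=(\s^{m-1}f)'/\s^{m-1}$ (which follows from \eqref{eq1a}); in particular $\dive(\s\,\partial_{r})=m\,\s'$. Integrating $\langle\nabla u,\s\,\partial_{r}\rangle$ by parts and using $u|_{\partial\Omega}=0$ gives $\int_{\Omega}\s\,\pr u=-m\int_{\Omega} u\,\s'$. For the boundary term, Remark~\ref{condbordobislin} says that on the level set $\partial\Omega=\{u=0\}$ the gradient is normal, so $\nabla u=(\pn u)\,\nu=c\,\nu$ there; hence $\s\,\pr u=c\,\langle\s\,\partial_{r},\nu\rangle$ and $\pn u=c$ on $\partial\Omega$, and a further application of the divergence theorem gives
\begin{equation*}
\int_{\partial\Omega}(\s\,\pr u)\,\pn u=c^{2}\int_{\partial\Omega}\langle\s\,\partial_{r},\nu\rangle=c^{2}\int_{\Omega}\dive(\s\,\partial_{r})=m\,c^{2}\int_{\Omega}\s' .
\end{equation*}
Combining the three identities yields
\begin{equation*}
(2-m)\int_{\Omega} u\,\s''\,\pr u=(m+2)\int_{\Omega} u\,\s'-m\,c^{2}\int_{\Omega}\s' .
\end{equation*}

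Finally I would convert the left-hand side into an integral of $u^{2}$. Writing $u\,\s''\,\pr u=\tfrac12\langle\nabla(u^{2}),\s''\,\partial_{r}\rangle$ and integrating by parts once more (the boundary term vanishing again because $u=0$ on $\partial\Omega$), together with $\dive(\s''\,\partial_{r})=(\s''\s^{m-1})'/\s^{m-1}$, one gets
\begin{equation*}
\int_{\Omega} u\,\s''\,\pr u=-\frac12\int_{\Omega}\frac{(\s''\s^{m-1})'}{\s^{m-1}}\,u^{2} ,
\end{equation*}
and substituting this into the previous line and rearranging gives exactly \eqref{eq5}. I do not expect a genuine obstacle here: the proof is a chain of integrations by parts, and the only delicate points are checking that $\s\,\pr u$ is regular enough at $o$ for Lemma~\ref{lemma1} and Green's identity to apply, and the reduction of the two boundary integrals — which is precisely where the overdetermination $\pn u=c$ (through $\nabla u=c\,\nu$ on $\partial\Omega$) enters in an essential way.
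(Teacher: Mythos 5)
Your argument is correct and coincides with the paper's own proof: both apply Green's identity to $u$ and $\s\,\pr u$ using the collapsed form of \eqref{eq2}, reduce the boundary term to $c^{2}\int_{\partial\Omega}\langle\s\,\partial_{r},\nu\rangle=mc^{2}\int_{\Omega}\s'$ via the overdetermined condition and the divergence theorem, and convert $\int_{\Omega}\s\,\pr u$ and $\int_{\Omega}\s''u\,\pr u$ by the same two integrations by parts. The only differences are notational (you phrase the radial integrations via $\dive(f(r)\partial_{r})=(\s^{m-1}f)'/\s^{m-1}$ where the paper uses $\Delta$ of a radial primitive and of $\s'$), so there is nothing to add.
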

\begin{remark}
 {\rm
  In particular, if $\s(r) = r$ and, hence, $\M^{m}_{\s} = \rr^{m}$, we obtain 
\begin{equation}\label{eq5a}
 (m+2)\int_{\Omega} u = m c^{2}|\Omega|,
\end{equation}
as in the original Weinberger argument; where $|\Omega|$ is the volume of the domain $\Omega$.
 }
\end{remark}
\begin{proof}
First of all we observe that, in this setting, formula \eqref{eq2} becomes
\begin{equation*}
\Delta(\s \, \pr u) =  - 2\s' + (2-m) \s''\, \pr u.
\end{equation*}
So by Green's Theorem
\begin{align*}
\int_{\Omega}- 2\s'\, u + (2-m) \s''\, \pr u\, u+\s\, \pr u =&\int_{\Omega}\Delta(\s \, \pr u) u-\s\, \pr u\, \Delta u \\
 =&\int_{\partial\Omega}\pn(\s \, \pr u) u-\s\, \pr u\, \pn u \\ 
 =& -\int_{\partial\Omega}\s\, (\pn u)^2\pn r\\
 =&-c^2\int_{\partial\Omega}\s\, \pn r\\
 =&-c^2\int_{\Omega}\s\, \Delta r+g_{\M^m_\s}(\nabla r,\nabla \s)\\
 =&-c^2\int_{\Omega}\s\, (m-1)\dfrac{\s'}{\s}+\s'\\
 =&-c^2m\int_{\Omega}\s',
 \end{align*}
 where we have used the fact that $u=0$ on $\partial\Omega$ and that $\pn u=c$ on $\partial\Omega$.$\\ $
Now note that
\begin{align*}
\int_{\Omega} \s \pr u &= \int_{\Omega} g_{\M^m_\s}(\nabla u, \nabla ( \int_{0}^{r}\s(s)ds))\\
&= - \int_{\Omega} u \Delta (\int_{0}^{r}\s(s)ds)\\
&= -m \int_{\Omega} u \s'.
\end{align*}
Using this and the previous computation we have
 \begin{equation}\label{eq6}
  (m+2)\int_{\Omega} u\,\s'  = m c^{2}\int_{\Omega} \s' + (2-m) \int_{\Omega} \s'' \, u \pr u.
\end{equation}
Finally we observe that
\begin{align}\label{eq7}
 \int_{\Omega} \s'' \, u \pr u &= \int_{\Omega}g_{\M^m_\s}(\nabla \s', \nabla (\frac{1}{2}u^{2}))\\\nonumber
 &= -  \frac{1}{2} \int_{\Omega} \Delta \sigma' \, u^{2}\\
 &= - \frac{1}{2} \int_{\Omega} \frac{(\s'' \s^{m-1})'}{\s^{m-1}} u^{2}, \nonumber
 \end{align}
where the second and the third equations are obtained using the condition $u= 0$ on $\partial \Omega$ and the expression \eqref{eq1a}, respectively.$\\ $
 \end{proof}

\section{Proof of Theorem \ref{teofond}}
Now we are ready to prove the main result of this paper.

\begin{proof}[Proof of Theorem \ref{teofond}]
Let $u$ and $\Omega$ as in the statement of Theorem \ref{teofond}; by the Bochner formula and the Cauchy-Schwarz inequality we get
\begin{align}\label{eq9}
 \Delta \left( m  |\nabla u|^{2} + 2 u\right) &= 2m |\Hess(u)|^{2}+2m\ric_{\M^{m}_{\s}}(\nabla u,\nabla u)+2\Delta u \\\nonumber
 &\geq  2(m |\Hess(u)|^{2} +\Delta u)\\\nonumber
 &=2\left( m |\Hess(u)|^{2} - (\Delta u)^{2} \right) \\\nonumber
 &\geq 0 \text{ on }\Omega,\nonumber
 \end{align}
 and the equality holds if and only if 
\begin{equation*}
\Hess(u)=\dfrac{\Delta u}{m}g_{\M^{m}_{\s}}
\end{equation*} 
 and
 \begin{equation*}
\ric_{\M^{m}_{\s}}(\nabla u,\nabla u)=0.
\end{equation*} 
Since, according to Remark \ref{condbordobislin},
\begin{equation}\label{eq10}
  \left(m |\nabla u|^{2} + 2 u\right) = mc^{2} \text{ on } \partial\Omega,
\end{equation}
we conclude from the Strong Maximum Principle that either
\begin{equation}\label{eq11}
\left( m  |\nabla u|^{2} + 2 u\right) < m c^{2} \text{ on }\Omega
\end{equation}
or
\begin{equation}\label{eq12}
  \left( m  |\nabla u|^{2} + 2 u\right) \equiv  m c^{2} \text{ on }\Omega.
\end{equation}
By contradiction assume that condition \eqref{eq11} is satisfied. According to $(b)$ we can multiply both the members of \eqref{eq11} by $\s'$ and integrate in order to obtain
\begin{equation}\label{eq13}
 m \int_{\Omega} |\nabla u|^{2}\s' + 2 \int_{\Omega} u\, \s' < mc^{2} \int_{\Omega} \s'.
\end{equation}
Now we use the identity \eqref{eq5} to deal with the second term i.e.
 \begin{equation}\label{eqfinale}
 2\int_{\Omega} u\,\s'  = m c^{2}\int_{\Omega} \s' + \frac{(m-2)}{2}  \int_{\Omega} \frac{(\s'' \s^{m-1})'}{\s^{m-1}} u^{2}-m\int_{\Omega}u\s'.
 \end{equation}
Note that, by the divergence theorem,
\begin{equation}\label{eq14}
m  \int_{\Omega} \s' \dive(u \nabla u) =  - m \int_{\Omega} \s'' u\, \pr u.
\end{equation}
Moreover,
\begin{equation*}
m  \int_{\Omega} \s' \dive(u \nabla u)=m  \int_{\Omega} \s'|\nabla u|^{2}-m  \int_{\Omega} \s'u.
\end{equation*}
So
\begin{equation}\label{eqfinalebis}
m  \int_{\Omega} \s'|\nabla u|^{2}=m  \int_{\Omega} \s'u- m \int_{\Omega} \s'' u\, \pr u.
\end{equation}
Substituting \eqref{eqfinale} and \eqref{eqfinalebis} in \eqref{eq13} we obtain
\begin{equation*}
 - m \int_{\Omega} \s'' u\, \pr u+m  \int_{\Omega} \s'u+ m c^{2}\int_{\Omega} \s' + \frac{(m-2)}{2}  \int_{\Omega} \frac{(\s'' \s^{m-1})'}{\s^{m-1}} u^{2}-m\int_{\Omega}u\s'<mc^{2} \int_{\Omega} \s'.
 \end{equation*}
Lastly, we use the identity \eqref{eq7} to deduce
 \begin{equation*}
 \frac{m}{2}\int_{\Omega} \frac{(\s'' \s^{m-1})'}{\s^{m-1}} u^{2}+\frac{(m-2)}{2}  \int_{\Omega} \frac{(\s'' \s^{m-1})'}{\s^{m-1}} u^{2}<0,
 \end{equation*}
 i.e.
\begin{equation}\label{eq15}
- (m-1) \int_{\Omega}  \frac{(\s'' \s^{m-1})'}{\s^{m-1}} u^{2}>0;
\end{equation}
and this contradicts the ``compatibility'' condition \eqref{teofondhp}.$\\ $
Therefore \eqref{eq12} holds true and $m  |\nabla u|^{2} + 2 u$ must be constant in $\Omega$. Since its Laplacian then vanishes, we conclude from \eqref{eq9} that equality must hold in Cauchy-Schwarz inequality, i.e. we have proved that $u$ is a solution of (recall that $\Delta u=-1$ in $\Omega$) 
\begin{equation}\label{eq8}
 \Hess(u) = -\frac{1}{m}g_{\M^{m}_{\s}}\text{ in }\Omega.
\end{equation} 
Now, let $\rho:=dist(o,\partial\Omega)$ and take $B_{\rho}(o)\subset\Omega$. Since $\partial\Omega$ is compact, there exists $p\in\partial\Omega$ such that $p\in\partial\Omega\cap\partial B_{\rho}(o)$. In particular, since $u=0$ on $\partial\Omega$, we have that 
\begin{equation*}
u(p)=0.
\end{equation*}
If we prove that $u$ is a radial function in $B_{\rho}(o)$ then 
\begin{equation*}
u=0 \text{ on } \partial B_{\rho}(o).
\end{equation*}
On the other hand, by the Strong Maximum Principle, 
\begin{equation*}
u>0 \text{ in } \Omega.
\end{equation*}
Therefore we can conclude that $\partial B_{\rho}(o)\cap\Omega=\emptyset$ and, hence, $\Omega=B_{\rho}(o)$.$\\ $
So the keypoint is to prove that $u:B_{\rho}(o)\rightarrow\mathbb{R}$, solution of \eqref{eq8}, is a radial function in $B_{\rho}(o)$. 
To this end, take $x\in B_{\rho}(o)$. Since $\M_{\sigma}^{m}$ is geodesically complete there exist a minimizing and normalized geodesic $\gamma\subset B_{\rho}(o)$ from $o$ to $x$. Let $y(t):=u\circ\gamma(t)$ and note that, along $\gamma$, equation \eqref{eq8} implies

\begin{align*}
y''(t)&=\dfrac{d^2}{dt^2}(u\circ\gamma)(t) \\
&=\dfrac{d}{dt}g_{\M^{m}_{\s}}(\nabla u(\gamma(t)),\dot{\gamma}(t)) \\
&=g_{\M^{m}_{\s}}(D_{\dot{\gamma}}\nabla u(\gamma(t)),\dot{\gamma}(t))+g_{\M^{m}_{\s}}(\nabla u(\gamma(t)),D_{\dot{\gamma}}\dot{\gamma}(t))\\
&=g_{\M^{m}_{\s}}((D_{\dot{\gamma}(t)}\nabla u)(\gamma(t)),\dot{\gamma}(t))\\
&=\Hess(u)\mid_{\gamma(t)}(\dot{\gamma}(t),\dot{\gamma}(t))\\
&=-\dfrac{1}{m}.
\end{align*}
The solutions of $y''(t)=-\dfrac{1}{m}$ are given by 
\begin{equation*}
y(t)=-\dfrac{1}{2m}t^2+\alpha t+\beta
\end{equation*}
where $\alpha, \beta\in\mathbb{R}$. Now taking $t=r(x)$ we get
\begin{equation}\label{uradial}
u(x)=u\circ\gamma(r(x))=y(r(x))=-\dfrac{1}{2m}r(x)^2+\alpha r(x)+\beta
\end{equation}
which is radial. To determine the two constant in \eqref{uradial} we recall that $u$ satisfies the following
\begin{equation*}
\begin{cases}
 u(\rho)=0 \\
 u(r) > 0 &\mbox{for } 0<r<\rho
\end{cases}
\end{equation*}
i.e., using the explicit formula of $u$ we obtain
\begin{equation*}
\begin{cases}
-\dfrac{1}{2m}\rho^2+\alpha \rho+\beta=0 \\ 
 \\
 -\dfrac{1}{2m}\left(\dfrac{\rho}{2}\right)^2+\alpha \dfrac{\rho}{2}+\beta > 0 &\mbox{for } r=\dfrac{\rho}{2}
\end{cases}
\end{equation*}
substituting the expression $\beta=\dfrac{1}{2m}\rho^2-\alpha \rho$ in the second equation we get
\begin{equation*}
\alpha<\dfrac{3}{4m}\rho.
\end{equation*}
But, since $u$ must be a $C^2$-function we have that $\alpha=0$; indeed, if we consider the Euclidean case where $r(x)=d(x,0)=\abs{x}$ the gradient of $u$ becomes
\begin{equation}\label{gradEuclideo}
\nabla u(x)=-\dfrac{1}{m}x+\alpha\dfrac{x}{\abs{x}}
\end{equation}
which is not a $C^1$ function in the origin (i.e. the pole of the Euclidean space) unless $\alpha=0$. In a generic model the expression \eqref{gradEuclideo} holds in a system of normal coordinates in the pole. So the same conclusion holds and $\beta=\dfrac{1}{2m}\rho^2$; with this constants the function $u$ becomes
\begin{equation*}
u(r)=-\dfrac{1}{2m}r^2+\dfrac{1}{2m}\rho^2
\end{equation*}
which is exactly the expression \eqref{uexplicit}; observe that, since $u$ is radial, $\pn u=u'(r)$ and the condition $\pn u=constant$ in $\partial\Omega=\partial B_{\rho}(o)$ is automatically satisfied.$\\ $
Moreover we recall that if $f:\mathbb{M}^{m}_{\s}\rightarrow\mathbb{R}$ is a smooth radial function, then its Hessian takes the following expression 
\begin{equation}
\Hess(f)=f''dr\otimes dr+f'\s\s'g_{\mathbb{S}^{m-1}}.
\end{equation}
Using this expression with the function $u$ we get
\begin{equation}
\Hess(u)=-\dfrac{1}{m}dr\otimes dr-\dfrac{1}{m}r\s\s'g_{\mathbb{S}^{m-1}},
\end{equation}
and using this latter in \eqref{eq8} we obtain
\begin{equation*}
-\dfrac{1}{m}dr\otimes dr-\dfrac{1}{m}r\s\s'g_{\mathbb{S}^{m-1}}=-\dfrac{1}{m}(dr\otimes dr+\s^2g_{\mathbb{S}^{m-1}})
\end{equation*}
i.e. 
\begin{equation*}
-\dfrac{1}{m}r\s\s'g_{\mathbb{S}^{m-1}}=-\dfrac{1}{m}\s^2g_{\mathbb{S}^{m-1}}.
\end{equation*}
It follows that $\sigma(r)=r$, so in the ball $B_{\rho}(o)$ not only the solution of \eqref{eq8} is radial but also the metric $g_{\M^{m}_{\s}}$ is the Euclidean metric. This implies that the ball $B_{\rho}(o)$ is a Euclidean ball and the claim follows.

\end{proof}

\begin{remark}[An alternative end of the proof]
{\rm
From the equality sign in the Bochner inequality \eqref{eq9} we get
\begin{equation}
\ric_{\M_{\s}^m}(\nabla u,\nabla u)=0.
\end{equation}
From the explicit expression of $u$ (formula \eqref{uexplicit}) we see that the only critical point is in $r=0$, i.e. in the pole $o$ of the model. So the condition on the Ricci curvature becomes 
\begin{equation}
\ric_{\M_{\s}^m}(\nabla r,\nabla r)=0 \text{ in } B_{\rho}(o)\setminus\lbrace o\rbrace.
\end{equation}
From the explicit expression of $\ric_{\M_{\s}^m}(\nabla r,\nabla r)$ we get $\s''=0$ in $(0,\rho)$ and we conclude that $\s(r)=r$, i.e. $B_{\rho}(o)$ is an Euclidean ball. 
}
\end{remark}

\begin{remark}
{\rm
In \cite{Ros} by Ros we can find a similar spirit where, using the Reilly's formula, he obtained a generalization of Alexandrov theorem for compact hypersurfaces with constant higher order mean curvatures; in this article equation \eqref{eq8} is used to prove a Euclidean symmetry result on a generic compact Riemannian manifold of non-negative Ricci curvature with smooth boundary with mean curvature positive everywhere.
}
\end{remark}

\begin{remark}
{\rm
In this remark we provide an example that shows that if the ``compatibility'' condition \eqref{teofondhp} is not satisfied then we can not have Euclidean symmetry. According to the result of Kumaresan and Prajapat \cite{KP} we know that if we take a domain $\Omega\subset\mathbb{S}^m$ such that $\bar{\Omega}\subset\mathbb{S}^m_+$ and there exist a solution $u$ to the Serrin's symmetry problem \eqref{serrinpb} then $\Omega$ must be a geodesic ball and $u$ must be radially symmetric. We know that the hemisphere is isometric to the model $\mathbb{M}^m_\s$ with $\s(r)=\sin(r)\mid_{[0,\pi/2]}$; so in this example conditions $(a)$ and $(b)$ of Theorem \ref{teofond} are clearly satisfied and the ``compatibility'' condition \eqref{teofondhp} becomes:
\begin{equation*}
 \int_{\Omega}  \frac{(\s'' \s^{m-1})'}{\s^{m-1}} u^{2}=\int_{\Omega}  -m\cos(r) u^{2}(r),  
\end{equation*}
which is negative due to the monotonicity of the integral and to the fact that the function $r\mapsto\cos(r)u^2(r)$ is positive in $\Omega$. $\\ $
In conclusion the ``compatibility'' condition is not satisfied and the symmetry result is not Euclidean since the ball $\Omega$ is a geodesic ball, i.e. the metric in this ball is the metric of the sphere. 
}
\end{remark}




\medskip 

\noindent{\textbf{Acknowledgement.}} The author wishes to thank Stefano Pigola for his precious help and useful discussions. The author has been supported by the ``Gruppo Nazionale per l'Analisi Matematica, la Probabilit\`a e le loro Applicazioni'' (GNAMPA) of the ``Istituto Nazionale di Alta Matematica'' (INdAM). 

\medskip

\addresseshere

\end{document}